\newcommand{\J}{\mathcal{J}}
\newcommand{\Z}{\mathbb{Z}}
\newcommand{\inv}{^{-1}}
\newcommand{\im}{\ensuremath{\operatorname{im}}}
\newcommand{\Aut}{\ensuremath{\operatorname{Aut}}}
\newcommand{\Gal}{\ensuremath{\operatorname{Gal}}}
\newcommand{\Div}{\operatorname{Div}}
\newcommand{\Prin}{\operatorname{Pr}}
\newtheorem{thm}{Theorem}[section]
\newtheorem{prop}[thm]{Proposition}
\newtheorem{defn}[thm]{Definition}
\newtheorem{rem}[thm]{Remark}
  \let\oldrem\rem
  \renewcommand{\rem}{\oldrem\normalfont}
\newtheorem{ex}[thm]{Example}
  \let\oldex\ex
  \renewcommand{\ex}{\oldex\normalfont}
\let\@fnsymbol\@arabic
\title{Jacobians of Graphs via Edges and Iwasawa Theory}
\author{Jon Aycock} 
\date{\today}
\begin{document}

\maketitle

\rhead{\thepage}
\cfoot{}

\begin{abstract}
The Jacobian is an algebraic invariant of a graph which is often seen in analogy to the class group of a number field. In particular, there have been multiple investigations into the Iwasawa theory of graphs with the Jacobian playing the role of the class group. In this paper, we construct an Iwasawa module related to the Jacobian of a $\Z_p$-tower of connected graphs, and give examples where we use this to compute asymptotic sizes of the Jacobians in this tower.
\end{abstract} 

\section{Introduction}
\label{sec:intro}

The Jacobian is an algebraic invariant of a graph which is often seen in analogy to the class group of a number field. Classically, the Jacobian is defined in terms of the vertices of the graph; however, in this paper we give another construction of the Jacobian in terms of the edges in Definition \ref{def:edgejacobian}. We prove that this gives an isomorphic group in Proposition \ref{thm:comparejacobians}.

Because of the analogy to the class group, there have been investigations into a version of Iwasawa theory for graphs. In \cite{MV1,MV2}, the authors $p$-adically interpolate the Ihara zeta function and use an analog of the analytic class number formula to get asymptotics for the sizes of Jacobians in a $\Z_p$-tower of graphs. A version of the Iwasawa Main Conjecture is explored in \cite{KM}. Special values of Artin--Ihara zeta functions are also used in \cite{HMSV} to prove analogs of some other results in class field theory. In \cite{Gonet,Kataoka}, the authors instead construct a $\Lambda$-module which is related to the Jacobian. This is more in line with the methods of this paper; the main result here is Theorem \ref{thm:themodule}, which is roughly as follows.

\begin{thm}
Let $\Gamma$ be a group isomorphic to $\Z_p$, $\Gamma_n$ be its quotient isomorphic to $\Z/p^n\Z$, and let $\Gamma^{(n)}$ be the kernel of the projection $\Gamma \to \Gamma_n$. There is a module $J(\infty)$ over the Iwasawa algebra $\Lambda = \Z_p[[\Gamma]]$ whose coinvariants $J(n) = J(\infty) \otimes_\Lambda \Z_p[\Gamma_n]$ are isomorphic to an extension of the $p$-power torsion subgroup of the Jacobian $\J(X_n)[p^\infty]$ by $\Gamma^{(n)} \cong \Z_p$.
\end{thm}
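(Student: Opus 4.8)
The plan is to realize $J(\infty)$ as the edge Jacobian of the limit graph $X_\infty$ taken over $\Lambda$, and to extract the extension by tensoring down while carefully tracking a single $\Tor_1$ term. First I would set up the chain data of the $\Z_p$-cover $X_\infty$. Because $\Gamma$ permutes the edges and vertices of $X_\infty$ freely with $|E_0|$ and $|V_0|$ orbits, the chain groups $C_1 = C_1(X_\infty;\Z_p)$ and $C_0 = C_0(X_\infty;\Z_p)$ are \emph{free} $\Lambda$-modules; write $\partial\colon C_1\to C_0$ for the boundary and $\partial^T\colon C_0\to C_1$ for the coboundary. Two facts drive everything: $\partial^T$ is injective over $\Lambda$ (a finitely supported locally constant function on the connected infinite graph $X_\infty$ is zero), and $\coker\partial = H_0(X_\infty;\Z_p)$ is $\Z_p$ with trivial $\Gamma$-action, i.e.\ $\coker\partial \cong \Lambda/(T)$ via degree, where $T = \gamma-1$ for a topological generator $\gamma$ of $\Gamma$. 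Using Definition \ref{def:edgejacobian} and Proposition \ref{thm:comparejacobians} to identify the edge Jacobian $C_1/(\ker\partial+\im\partial^T)$ with $\im\partial/\im(\partial\partial^T)$, I define
\[ J(\infty) \;:=\; \coker\bigl(\partial\partial^T\colon C_0 \to \im\partial\bigr), \]
noting that $\partial\partial^T$ lands in $\im\partial$ since $\deg\circ\partial = 0$.

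Next I would compute the coinvariants. Since $\coker$ and $-\otimes_\Lambda R_n$ are both right exact (write $R_n = \Z_p[\Gamma_n]$),
\[ J(n) = \coker(\phi), \qquad \phi := (\partial\partial^T)\otimes R_n\colon C_0(X_n) \to (\im\partial)\otimes_\Lambda R_n. \]
To understand the target I tensor the degree sequence $0\to\im\partial\to C_0\xrightarrow{\deg}\Lambda/(T)\to 0$ with $R_n$. From the resolution $\Lambda\xrightarrow{\,\cdot T\,}\Lambda$ of $\Lambda/(T)$ one gets $\Tor_1^\Lambda(\Lambda/(T),R_n) = \ker(T\colon R_n\to R_n) = \Z_p\cdot N_n$, spanned by the norm $N_n = \sum_{g\in\Gamma_n} g$; by Shapiro's lemma $\Tor_*^\Lambda(\Z_p,R_n)\cong H_*(\Gamma^{(n)};\Z_p)$, so this term is \emph{canonically} $H_1(\Gamma^{(n)};\Z_p)\cong\Gamma^{(n)}$. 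As $C_0$ is free, the $\Tor$ sequence gives
\[ 0 \to \Gamma^{(n)} \xrightarrow{\,j\,} (\im\partial)\otimes_\Lambda R_n \xrightarrow{\,\bar\iota\,} \Div^0(X_n;\Z_p) \to 0, \]
where $\Div^0 = \ker(\deg) = \im\partial_n$ and $\bar\iota\circ\phi = \partial_n\partial_n^T$.

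Finally I would assemble the extension by pushing $J(n)=\coker\phi$ through $\bar\iota$. The quotient $J(n)/j(\Gamma^{(n)})$ is $\Div^0(X_n;\Z_p)/\im(\partial_n\partial_n^T)$, which is finite of $\Z_p$-rank zero (as $\rank\partial_n\partial_n^T = |V_n|-1 = \rank_{\Z_p}\Div^0$) and equals the $p$-primary Jacobian $\J(X_n)[p^\infty]$ by Proposition \ref{thm:comparejacobians}. It remains to see $j(\Gamma^{(n)})$ injects into $J(n)$, i.e.\ $\Gamma^{(n)}\cap\im\phi = 0$. Since $\ker(\partial_n\partial_n^T) = \ker\partial_n^T = \Z_p\cdot\mathbf 1_n$ (constants), we have $\Gamma^{(n)}\cap\im\phi = \phi(\Z_p\mathbf 1_n)$, and the key point is that $\phi$ factors through the finite-level coboundary: by functoriality, $\phi = (\partial\otimes R_n)\circ(\partial^T\otimes R_n)$ with $\partial^T\otimes R_n = \partial_n^T$, so $\phi(\mathbf 1_n) = (\partial\otimes R_n)(\partial_n^T\mathbf 1_n) = 0$. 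Hence
\[ 0 \to \Gamma^{(n)} \to J(n) \to \J(X_n)[p^\infty] \to 0 \]
is exact, as claimed.

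I expect the main obstacle to be the homological bookkeeping of the middle step: pinning down that the single extra $\Z_p$ is precisely the $\Tor_1$ term and identifying it \emph{canonically} with $\Gamma^{(n)}$ via Shapiro's lemma and the norm element, together with the naturality of $\partial^T$ under coinvariants that makes the constant class $\mathbf 1_n$ genuinely die. The right-exactness manipulations and the edge/vertex comparison are then routine given Proposition \ref{thm:comparejacobians}.
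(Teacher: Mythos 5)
Your proposal is correct, but it follows a genuinely different track from the paper's. The paper never leaves the edge presentation: it keeps $J(\infty)=C(\infty)/(H(\infty)+S(\infty))$, tensors down, notes that $S(n)$ recovers the full star group of $X_n$ while $H(n)$ is only a finite-index submodule of $H_1(X_n,\Z_p)$, and identifies the defect $H_1(X_n,\Z_p)/H(n)$ with $\Gamma^{(n)}$ via the covering-theoretic fact from Section \ref{sec:homologyincovers} that $\coker(\pi_*)\cong \Gal(X_\infty/X_n)$. You instead pass to the vertex side at the infinite level (the $\Lambda$-module version of Proposition \ref{thm:comparejacobians}, which does go through verbatim and gives a $J(\infty)$ isomorphic to the paper's), present it as $\coker\bigl(d\circ s\colon D(\infty)\to \im d\bigr)$, and locate the extra $\Z_p$ as $\Tor_1^\Lambda(\coker d,\Z_p[\Gamma_n])$. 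These are the two ends of the same universal-coefficients sequence for the two-term complex $C(\infty)\to D(\infty)$: the paper reads the defect off $H_1$, you read it off $H_0$. Your route buys a self-contained homological argument in which the extension class is visibly a $\Tor_1$ and the injectivity of $\Gamma^{(n)}\to J(n)$ reduces to the clean check that $\ker\Delta_n=\Z_p\cdot\mathbb{1}$ is killed by $s_n$ (the paper leaves the corresponding injectivity, i.e.\ $H_1(X_n,\Z_p)\cap(H(n)+S(n))=H(n)$, implicit). What it costs is that you must actually prove $\coker\bigl(d\colon C(\infty)\to D(\infty)\bigr)\cong\Lambda/(T)$; this is precisely the connectedness of the tower at infinity, equivalent to the image of the voltage assignment on $H_1(X_0,\Z)$ being dense in $\Gamma$, so state and use that standing hypothesis explicitly. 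Two smaller remarks: the injectivity of $s$ over $\Lambda$ that you list as a driving fact is never used; and Shapiro's lemma is heavier than needed, since the direct computation $\Tor_1^\Lambda(\Lambda/(T),\Z_p[\Gamma_n])=\ker\bigl(T\mid \Z_p[\Gamma_n]\bigr)=\Z_p\cdot N_n\cong\Z_p$ with $N_n=\sum_{g\in\Gamma_n}g$ already suffices for the statement as given.
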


One may contrast this with \cite[Proposition 4]{Gonet}, which states that the $\Lambda$-module constructed in that paper is an extension of $\Z_p$ by $\J(X_n)[p^\infty]$. In addition to the extra copy of $\Z_p$ being a submodule instead of a quotient, the way in which these extra factors arise are different. In \cite{Gonet}, one must control for the degree of the divisor. However, in Theorem \ref{thm:themodule}, the extra factor of $\Z_p$ comes from Galois theory; we first mention this defect in Section \ref{sec:homologyincovers}, and we talk about it in more detail in Remark \ref{rem:actiononliftedspanningtrees}.

In Section \ref{sec:jacobians}, we explore the classical theory of the Jacobian of a graph. In Section \ref{sec:towers}, we give some facts about the Galois theory of covering graphs, which we use to construct the module in Section \ref{sec:module}. Finally, we use this module (with some small modifications that control for the fact that $J(n)$ is a proper extension of the group we want to count) to actually get the asymptotics for the size of $\J(X_n)[p^\infty]$ for a specific example in Section \ref{sec:examples}.

\subsection{Acknowledgements}
The author would like to thank Daniel Vallieres for his introduction to the topic. 
In addition, the author would like to thank the person who produced the figures, though that person would like to remain anonymous.

\section{Jacobians}
\label{sec:jacobians}

\subsection{Graphs}
Im this paper, all of our graphs will be finite, connected, and undirected. Following conventions from e.g. \cite{Gonet,Terras}, we define a graph as follows:
\begin{defn}
A graph is a tuple $X = (V,\vec{E},o,t,\iota)$ of a set $V$ of vertices and a set $\vec{E}$ of directed edges with incidence maps $o,t \colon \vec{E} \to V$. The graph is finite if both $V$ and $\vec{E}$ are finite sets. A graph undirected if it is equipped with a fixed-point-free involution $\iota \colon \vec{E} \to \vec{E}$ with the properties that $o(\iota(e)) = t(e)$ and $t(\iota(e)) = o(e)$.
\end{defn}
We think of the edge $e$ as connecting the vertex $o(e)$ to the vertex $t(e)$, so that $\iota(e)$ runs the other way, connecting $t(e)$ to $o(e)$. The pair of directed edges $\{e,\iota(e)\}$ is thought of as a single undirected edge connecting the two vertices. We will also fix an orientation of $X$, meaning a partition $\vec{E} = E^+ \sqcup E^-$ with $\iota(E^+) = E^-$. We refer to $E^+$ as being the set of positively oriented edges.

A path on $X$ is a sequence of edges $e_1\cdots e_\ell$ with $o(e_{j+1}) = t(e_j)$ for each $i$. If, for any pair of vertices $v_1$ and $v_2$, there is a path $e_1\cdots e_\ell$ with $o(e_1) = v_1$ and $t(e_\ell) = v_2$, we say that $X$ is connected.

\begin{rem}
What we call a graph is often called a multigraph, e.g. in \cite{HMSV}. We will instead give the name ``simple graph" to what they call a graph. For ease of translation between papers, we give the definition of a simple graph here.

A loop on $X$ is an edge $e$ with $o(e) = t(e)$. Two distinct edges $e \neq e^\prime$ are said to be parallel if $o(e) = o(e^\prime)$ and $t(e) = t(e^\prime)$. A graph is simple if it has no loops and no two of its edges are parallel.
\end{rem}

Finally, we define what we mean by a morphism of graphs.
\begin{defn}
Let $X = (V_X,\vec{E}_X,o_X,t_X,\iota_X)$ and $Y = (V_Y,\vec{E}_Y,o_Y,t_Y,\iota_Y)$ be two graphs. A morphism $\phi \colon X \to Y$ is a pair of functions $\phi = (\phi_V,\phi_E)$ with $\phi_V \colon V_X \to V_Y$ and $\phi_E \colon \vec{E}_X \to \vec{E}_Y$ satisfying two compatibility relations. First, $\phi_E(\iota_X(e)) = \iota_Y(\phi_E(e))$. In addition, we should also have $\phi_V(o_X(e)) = o_Y(\phi_E(e))$ and $\phi_V(t_X(e)) = t_Y(\phi_E(e))$.
\end{defn}
Note that the morphism $\phi$ is determined by the choice of $\phi_E$, and in fact by its restriction to $E_X^+$. We say that a morphism is oriented if every positively oriented edge in $X$ is sent to a positively oriented edge in $Y$; i.e., if $\phi_E({E}_X^+) \subset {E}_Y^+$.

\subsection{Divisors and Principal Divisors}
The divisor group of $X$ is group of formal linear combinations of vertices, $\Div(X) = \Z V$. We have a degree homomorphism $\operatorname{deg} \colon \Div(X) \to \Z$ with the formula
\begin{equation}
	\operatorname{deg}\left( \sum_{v \in V} a_vv \right) = \sum_{v \in V} a_v.
\end{equation}
The kernel of $\operatorname{deg}$ is the group of degree zero divisors, $\operatorname{Div}^0(X)$.

For any pair of vertices $v \neq v^\prime \in V$, let $\operatorname{deg}(v,v^\prime)$ be the number of edges connecting $v$ to $v^\prime$.\footnote{Note that, if $X$ is a simple graph, $\operatorname{deg}(v,v^\prime) \in \{0,1\}$ for any pair of vertices $v$ and $v^\prime$. We make no restriction to simple graphs, so it is possible for multiple edges to connect the same pair of vertices. How we handle the existence (or non-existence) of loops will be inconsequential, since we will never ask for $\operatorname{deg}(v,v)$.} Then let $\operatorname{deg}(v) = \sum_{v^\prime \neq v} \operatorname{deg}(v,v^\prime)$ be the number of edges incident to $v$.
The Laplacian map $\Delta \colon \Div(X) \to \Div(X)$ is determined by the values, and extended $\Z$-linearly:
\begin{equation}
	\Delta(v) = \deg(v)v - \sum_{v^\prime \neq v} \operatorname{deg}(v,v^\prime)v^\prime.
\end{equation}
The image of $\Delta$ is known the group of principal divisors, and denoted $\Prin(X)$. Note that $\Prin(X)$ is contained in $\Div^0(X)$, since $\operatorname{deg}(\Delta v)$ is $0$ for all $v$. Then we can define:

\begin{defn}\label{defn:vertexjacobian}
The vertex Jacobian of the graph $X$ is the quotient $\J_v(X) = \Div^0(X)/\Prin(X)$.
\end{defn}

\begin{rem}
This definition should be seen in analogy to the definition of the Picard group for a curve. 
For intuition on why one might think that the image of the Laplacian should give the principal divisors, see e.g. \cite[Remark 1.4]{Baker}.
\end{rem}

We can compute the size and the invariant factors of the Jacobian of a graph by looking at the matrix for its Laplacian.

\begin{ex}\label{ex:vertex-jacobian-of-graph-1}
Consider the graph $X_{a,b}$ with three vertices $V = \{v_1,v_2,v_3\}$. We connect $v_1$ to $v_2$ with $a$ undirected edges labeled $e_1,\dots,e_a$, and connect $v_2$ to $v_3$ with $b$ undirected edges labeled $e_1^\prime,\dots,e_b^\prime$. In the ordered basis $v_1,v_2,v_3$, the Laplacian $\Delta_{X_{a,b}}$ has matrix
\begin{equation}
	\Delta = \begin{pmatrix}
		a & -a & 0 \\
		-a & a+b & -b \\
		0 & -b & b
	\end{pmatrix}
\end{equation}
This matrix is quickly seen to be singular. However, this is simply a manifestation of the fact that its image is contained in the subgroup $\Div^0(X)$ of $\Div(X)$, which has infinite index. The standard trick to modify this matrix in order to obtain a nonsingular matrix whose determinant is the size of $\J_v(X)$ is to pick a vertex to be a ``sink;" we will pick $v_2$ to be the sink, and this gives rise to an isomorphism $\Z v_1 \oplus \Z v_3 \xrightarrow{\sim} \Div^0(X)$ by the map $a_1v_1 + a_3v_3 \mapsto a_1v_1 + (-a_1-a_3)v_2 + a_3v_3$. This allows us to view $\Delta$ as a map to $\Z v_1 \oplus \Z v_3$; its matrix will be the same as the original matrix for $\Delta$, but with the row corresponding to $v_2$ omitted. If we consider the minor that we get by also omitting the column corresponding to $v_2$, we find that the cokernel of $\Delta$ will have order $\det \begin{pmatrix}
	a & 0 \\ 0 & b
\end{pmatrix} = ab$. Since $\J_v(X_{ab})$ is the cokernel of $\Delta$ viewed as a map to $\Div^0(X_{a,b})$ $\#\J_v(X_{a,b}) = ab$.
\end{ex}

\begin{rem}
The minor in Example \ref{ex:vertex-jacobian-of-graph-1} is independent of which column we omit, up to sign. Omitting the column corresponding to $v_2$ just gives the nicest remaining matrix, since it is diagonal. In fact, the minors of $\Delta_{X_{a,b}}$ obtained from omitting any single row and any single column are determined (up to sign) independently of which row and which column you omit. This is some of the content of Kirchhoff's Matrix-Tree Theorem.
\end{rem}

\begin{ex}\label{ex:vertex-jacobian-of-graph-2}
In this example, we compute the size of the Jacobian of the complete graph on $4$ vertices, $K_4$. Its Laplacian is the following matrix:
\begin{equation}
	\Delta = \begin{pmatrix}
		3 & -1 & -1 & -1 \\
		-1 & 3 & -1 & -1 \\
		-1 & -1 & 3 & -1 \\
		-1 & -1 & -1 & 3
	\end{pmatrix}
\end{equation}
As in the previous example, we can omit any single row and any single column and the determinant of the resulting matrix will give us the order of the cokernel of $\Delta$ as a map to $\Div^0(X)$, which is then the order of $\J_v(K_4)$. A calculation of invariant factors in Sage gives that $\J_v(K_4) \cong (\Z/4\Z)^2$, so that $\#\J_v(K_4) = 16$.
\end{ex}

\subsection{Stars and Homology}
In Section \ref{subsec:edges}, we will give another construction of the Jacobian of $X$. Here, we define the pieces that go into that discussion.
There are two important maps between $\Z V$ and $\Z E^+$ that we want to use.

\begin{defn}
The boundary map $d \colon \Z E^+ \to \Z V$ is determined by the values $d(e) = t(e) - o(e)$ and extended linearly. The kernel $\ker d$ is the first homology group of $X$, $H_1(X,\Z)$. For any prime number $p$, we can define $H_1(X,\Z_p) = H_1(X,\Z) \otimes_\Z \Z_p$.
\end{defn}

\begin{defn}
The map $s \colon \Z V \to \Z E^+$ is determined by the following values and extended linearly. The sums are over edges in $E^+$.
\begin{equation}
	s(v) = \sum_{t(e) = v} e - \sum_{o(e) = v} e.
\end{equation}
The image $\im s$ is denoted by $S(X,\Z)$. For any prime number $p$, we can define $S(X,\Z_p) = S(X,\Z) \otimes_\Z \Z_p$.
\end{defn}

The group $H_1(X,\Z)$ is actually the first homology group of $X$ viewed as a topological space. However, $S(X,\Z)$ is a more combinatorial object that is specific to the situation of a graph. We will refer to it as the group of stars, since it is generated by the elements $s(v)$, and $s(v)$ is the star into the vertex $v$.

Note that the composition $d \circ s \colon \Z V \to \Z V$ is the Laplacian $\Delta$.

\subsection{The Jacobian via Edges}\label{subsec:edges}
\begin{defn}\label{def:edgejacobian}
The edge Jacobian of the graph $X$ is the quotient $\J_e(X) = \Z E^+/[H_1(X,\Z)+S(X,\Z)]$.
\end{defn}

\begin{prop}\label{thm:comparejacobians}
The vertex Jacobian $\J_v(X)$ and the edge Jacobian $\J_e(X)$ are isomorphic.
\end{prop}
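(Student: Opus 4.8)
The plan is to use the boundary map $d \colon \Z E^+ \to \Z V$ as a bridge between the two descriptions, exhibiting $\J_v(X)$ as a quotient of $\Z E^+$ whose kernel is exactly the subgroup $H_1(X,\Z) + S(X,\Z)$ that defines $\J_e(X)$. All three pieces I need are already available: $\ker d = H_1(X,\Z)$ by definition, the identity $d \circ s = \Delta$, and the connectedness of $X$.

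First I would pin down the image and the relevant preimages under $d$. Since $d(e) = t(e) - o(e)$ has degree zero for every $e$, we have $\im d \subseteq \Div^0(X)$. For the reverse inclusion I would invoke connectedness: given vertices $v, v'$, choose a path from $v$ to $v'$ and note that, after replacing each negatively oriented edge $e$ occurring in it by $-\iota(e) \in \Z E^+$ (so that $d(-\iota(e)) = t(e) - o(e)$ records the correct traversal), the image under $d$ telescopes to $v' - v$. As $\Div^0(X)$ is generated by such differences, $d$ maps onto $\Div^0(X)$. The second ingredient is the identity $d \circ s = \Delta$ recorded above, which gives $d(S(X,\Z)) = d(\im s) = \im(d \circ s) = \im \Delta = \Prin(X)$.

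Next I would form the composite $\Z E^+ \xrightarrow{d} \Div^0(X) \twoheadrightarrow \Div^0(X)/\Prin(X) = \J_v(X)$, which is surjective because both arrows are. The heart of the argument is to identify its kernel as $H_1(X,\Z) + S(X,\Z)$. The inclusion $\supseteq$ is immediate, since $d$ kills $H_1(X,\Z)$ and sends $S(X,\Z)$ into $\Prin(X)$. For $\subseteq$, suppose $d(x) \in \Prin(X)$; because $\Prin(X) = d(S(X,\Z))$ we may write $d(x) = d(y)$ with $y \in S(X,\Z)$, whence $x - y \in \ker d = H_1(X,\Z)$ and therefore $x \in H_1(X,\Z) + S(X,\Z)$. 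The first isomorphism theorem then yields
\[
\J_e(X) = \Z E^+/[H_1(X,\Z)+S(X,\Z)] \;\cong\; \Div^0(X)/\Prin(X) = \J_v(X).
\]

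The only step requiring genuine care is the surjectivity of $d$ onto $\Div^0(X)$, where connectedness is essential and one must correctly account for edges of either orientation appearing in a path. By contrast, once $\Prin(X) = d(S(X,\Z))$ is in hand, the kernel computation is a short diagram chase, so I expect no further obstacle.
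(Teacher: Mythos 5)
Your proposal is correct and follows essentially the same route as the paper: both use the boundary map $d$, the surjectivity of $d$ onto $\Div^0(X)$ via connectedness, and the factorization $\Delta = d \circ s$ to identify $d(S(X,\Z)) = \Prin(X)$. Your write-up simply fills in more detail on the surjectivity and the kernel computation than the paper's brief argument.
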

\begin{proof}
The isomorphism will be induced by the boundary map $d$. Note that, since $X$ is connected, the image of $d$ is $\Div^0(X)$, so $d$ induces an isomorphism $\Z E^+/H_1(X,\Z) \to \Div^0(X)$. In addition, the Laplacian $\Delta \colon \Z V \to \Z V$ factors as the composition $d \circ s \colon \Z V \to \Z E^+ \to \Z V$; the image $\Prin(X)$ of $\Delta$ is precisely the image of $S(X,\Z)$ under $d$. Thus we have that $\J_e(X) = \Z E^+/[H_1(X,Z) + S(X,\Z)] \cong \Div^0(X)/\Prin(X) = \J_v(X)$, as desired.
\end{proof}

In light of this proposition, we will write $\J(X)$ for the edge Jacobian. This will be better to set up the constructions in Section \ref{sec:module}, and it will not contradict any previous papers that use the vertex Jacobian.

The two methods to defining the Jacobian each have their advantages. The vertex Jacobian gives an easier ability to compute the size of the group as in Examples \ref{ex:vertex-jacobian-of-graph-1} and \ref{ex:vertex-jacobian-of-graph-2}. However, the edge Jacobian's prioritization of the edges over the vertices aligns better with the theory of the Ihara zeta function; in this theory the primes on $X$ (i.e., the points of the space corresponding to $X$) are paths on $X$ rather than vertices. One can see this in e.g. \cite{Terras}.

\subsection{Spanning Trees}\label{sec:spanningtrees}
The Jacobian $\J(X)$ has a combinatorial meaning in counting specific subgraphs of $X$.

\begin{defn}
A graph $T$ is called a tree if it is connected and $H_1(T,\Z) = 0$. A spanning tree of $X$ is a subgraph of $X$ which is a tree, and which includes every vertex of $X$.
\end{defn}

\begin{ex}
Consider the graphs $X_{a,b}$ from Example \ref{ex:vertex-jacobian-of-graph-1}. Any spanning tree in $X_{a,b}$ needs to be spanning, meaning that it needs to contain all three vertices from $X_{a,b}$; and it needs to be a tree, meaning that it needs to be connected and contain no cycles. In order to specify such a subgraph, we just need to specify the edges. This involves making two independent choices: first, choose one edge out of the $a$ that connect $v_1$ to $v_2$, and then choose one edge out of the $b$ that connect $v_2$ to $v_3$. This gives a total of $ab$ spanning trees.
\end{ex}

\begin{ex}\label{ex:spanningK4}
As in Example \ref{ex:vertex-jacobian-of-graph-2}, we now consider the complete graph on $4$ vertices, $K_4$. All $16$ spanning trees are shown in Figure \ref{fig:spanningk4}.

\begin{figure}
\centering
\includegraphics[width=4in]{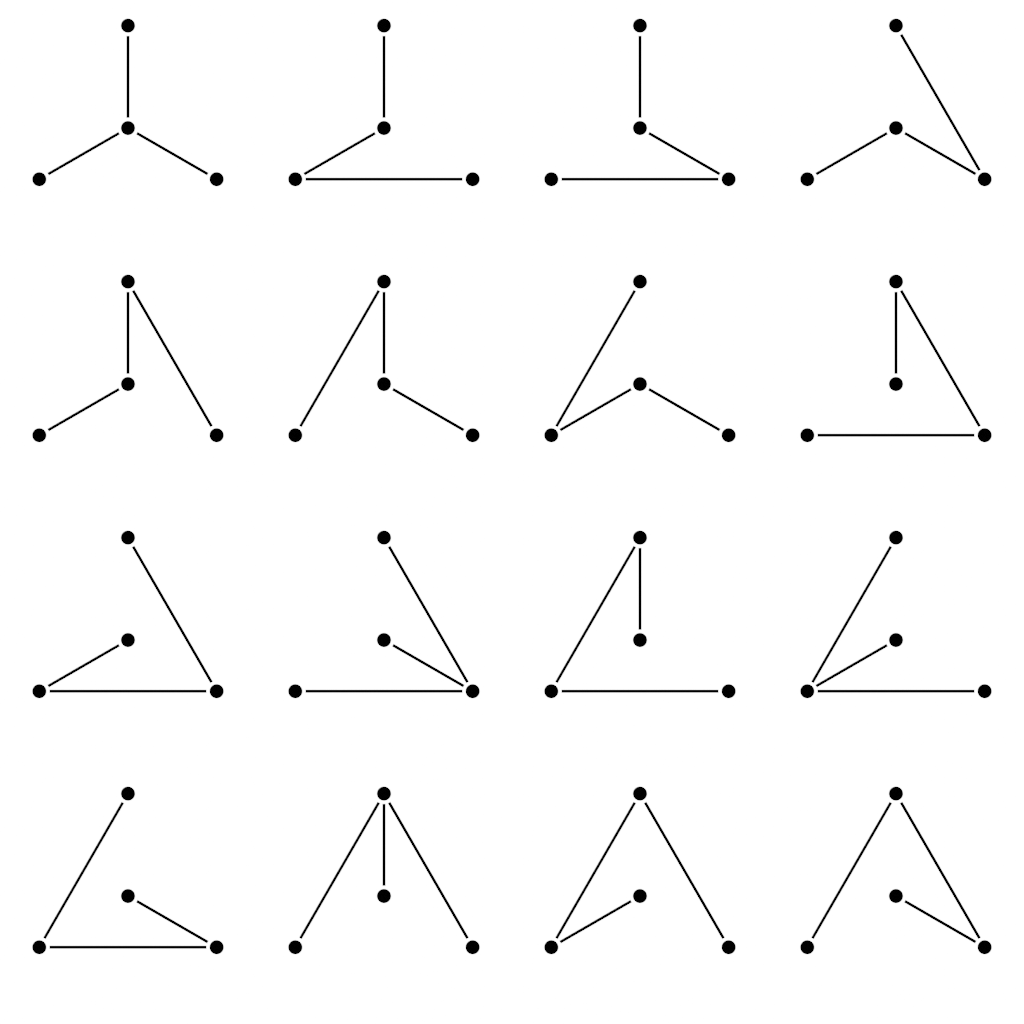}
\caption{The $16$ spanning trees on $K_4$, as in Example \ref{ex:spanningK4}.}
\label{fig:spanningk4}
\end{figure}
\end{ex}

Recall that $\#\J_v(X_{a,b}) = ab$ and $\#\J_v(K_4) = 16$. This suggests the following theorem:

\begin{thm}
Let $\kappa(X)$ be the number of spanning trees of $X$. Then $\kappa(X) = \#\J(X)$.
\end{thm}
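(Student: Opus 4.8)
The plan is to reduce the statement to the classical Matrix--Tree Theorem by passing through the vertex description of the Jacobian furnished by Proposition \ref{thm:comparejacobians}, and then to prove that theorem by the Cauchy--Binet formula. By that proposition it suffices to work with $\J_v(X) = \Div^0(X)/\Prin(X)$. First I would make the sink reduction already illustrated in Example \ref{ex:vertex-jacobian-of-graph-1} precise: fix a sink vertex $v_0$ and use the isomorphism $v \mapsto v - v_0$ to identify $\Div^0(X)$ with $\Z\langle v : v \neq v_0\rangle$. Since $\sum_{v} \Delta(v) = 0$, the group $\Prin(X)$ is already generated by $\{\Delta(v) : v \neq v_0\}$, so $\J_v(X)$ is the cokernel of the reduced Laplacian $\tilde{\Delta}$ obtained by deleting the row and column of $v_0$. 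Because $X$ is connected, $\tilde\Delta$ is nonsingular, whence $\#\J(X) = |\det \tilde{\Delta}|$.

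Next I would factor the Laplacian through the boundary map. Let $N$ be the $V \times E^+$ signed incidence matrix of $d$, with $N_{v,e} = 1$ if $t(e)=v$, $N_{v,e} = -1$ if $o(e)=v$, and $0$ otherwise. A direct computation of $N N^T$ recovers $\Delta$: the diagonal entries give $\deg(v)$ and the off-diagonal entries give $-\deg(v,v')$. Hence $\Delta = N N^T$, and correspondingly $\tilde{\Delta} = \tilde{N}\tilde{N}^T$ where $\tilde N$ deletes the $v_0$-row. Applying the Cauchy--Binet formula then yields $\det(\tilde N \tilde N^T) = \sum_S (\det \tilde N_S)^2$, the sum running over the $(\#V-1)$-element subsets $S$ of $E^+$, with $\tilde N_S$ the corresponding square submatrix.

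The crux is the lemma that $\det \tilde N_S \in \{0, \pm 1\}$, with $|\det \tilde N_S| = 1$ precisely when the edges of $S$ form a spanning tree. Granting this, each spanning tree contributes $1$ and every other $S$ contributes $0$, so the Cauchy--Binet sum equals $\kappa(X)$ and we conclude $\#\J(X) = \det \tilde\Delta = \kappa(X)$. I expect this lemma to be the main obstacle, since the two reductions above are essentially bookkeeping, whereas here one must simultaneously establish total unimodularity and the combinatorial characterization of the nonzero minors.

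For the lemma I would argue as follows. Total unimodularity of $\tilde N$ follows by induction on the size of a square submatrix: every column of $N$ has exactly one $+1$ and one $-1$, so any square submatrix either has a column with at most one nonzero entry, allowing cofactor expansion to a strictly smaller submatrix, or has every column summing to zero, in which case the rows are dependent and the determinant vanishes. To identify the nonzero minors, I would note that the columns of the full submatrix $N_S$ are $\Z$-linearly dependent exactly when some integer combination of the edges of $S$ lies in $\ker d = H_1(X,\Z)$, that is, exactly when the subgraph $(V,S)$ contains a cycle; deleting the $v_0$-row leaves the rank unchanged because the rows of $N$ sum to zero. Thus $\tilde N_S$ is nonsingular iff $(V,S)$ is acyclic, and an acyclic subgraph on all $\#V$ vertices with $\#V - 1$ edges is necessarily connected, hence a spanning tree. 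This is exactly the characterization needed to finish the Cauchy--Binet count.
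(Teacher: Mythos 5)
Your argument is correct, but it takes a genuinely different route from the paper. The paper does not prove the identity by a determinant computation at all: it invokes rotor-routing (citing \cite{CCG}) to realize the set of spanning trees as a \emph{torsor} for $\J(X)$, so that $\kappa(X) = \#\J(X)$ follows from the existence of a free transitive action. Your proof is the classical determinantal one: reduce to $\J_v(X) = \coker\tilde\Delta$ via the sink trick, factor $\Delta = NN^T$ through the incidence matrix, and apply Cauchy--Binet together with the lemma that the maximal minors of $\tilde N$ are $0$ or $\pm 1$ and are nonzero exactly on spanning trees. The trade-off is real: your route is self-contained and elementary (pure linear algebra, no choice-dependent combinatorial process), whereas the torsor approach proves something strictly stronger than equality of cardinalities --- a structure the paper actually cares about later, in Remarks \ref{rem:SpanningTreesAreATorsor} and \ref{rem:actiononliftedspanningtrees}, where the (non)existence of such actions for the edge Jacobian and for lifted spanning trees is the point under discussion. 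Your determinantal proof yields only the count. Two small points to tighten: the claim that every column of $N$ has exactly one $+1$ and one $-1$ fails for loop edges, whose columns are zero (since $d(e) = t(e) - o(e) = 0$); this is harmless --- any $S$ containing a loop contributes a zero minor, and no spanning tree contains a loop --- but it should be said, since the paper allows multigraphs. Also, the nonsingularity of $\tilde\Delta$ for connected $X$ need not be asserted separately: it falls out of your own Cauchy--Binet computation, since a connected graph has at least one spanning tree, so the sum of squared minors is positive.
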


In fact, we can prove this by realizing the set of spanning trees of $X$ as a torsor for $\J(X)$. See \cite{CCG} for a description of this process, which is known as rotor-routing.

\begin{rem}\label{rem:SpanningTreesAreATorsor}
It is the point of view of this paper that viewing the Jacobian in terms of edges is often profitable. However, this is one situation in which it is highly unclear how this is helpful. The process of rotor-routing essentially uses the vertices, and even relies on a choice of a base vertex. In \cite{CCG}, the action is described only on divisors of the form $(x-r)$ where $r$ is the chosen base vertex and $x$ is another vertex, and extended linearly. It is not described in a way that makes it clear what the action of a single edge $e$ should be, though we could describe it in terms of paths that end at the root. Even more, changing the base vertex changes the action (for non-planar graphs), which makes it even less likely that rotor-routing, as is, could be modified directly in such a way that it gives an action of $\J_e(X)$ without appealing to the isomorphism with $\J_v(X)$.

It seems difficult in general to give an action of divisors on spaces of subgraphs. We ask here whether or not there is a different process that gives an action of $\J_e(X)$ intrinsically; later in Remark \ref{rem:actiononliftedspanningtrees} we will ask a similar question for a similar group.
\end{rem}

\section{Covers of Graphs}
\label{sec:towers}

In this section, we discuss the Galois theory of graphs. We follow \cite{Gonet} for much of the exposition, though more detail is present in \cite{Terras}.

\subsection{Covering Graphs}
\begin{defn}
For an oriented graph $X$ and a chosen vertex $v \in V$, write $S_v^+(X) = \{ e \in E^+ \mid o(e) = v \text{ or } t(e) = v \}$.

Let $\tilde{X}$ and $X$ be connected graphs. We say that a morphism $\pi \colon \tilde{X} \to X$ is a covering map if the following two conditions are true.
\begin{itemize}
	\item $\pi_V \colon V_{\tilde{X}} \to V_X$ is surjective, and
	\item for each vertex ${v} \in V_{\tilde{X}}$, the restriction of $\pi_E$ to $S_{{v}}^+(X)$ gives a bijection $S_{{v}}^+(X) \xrightarrow{\sim} S_{\pi_V({v})}^+(B)$.
\end{itemize}
\end{defn}
\begin{rem}
Note that the second condition implicitly requires that $\pi$ be oriented. If $X$ is oriented but $\tilde{X}$ is unoriented, one can weaken that condition, and then induce an orientation on $\tilde{X}$ from the orientation on $X$ that makes $\pi$ a covering map as follows.

Write $S_v^\pm(\tilde{X}) = \{ e \in \vec{E}_{\tilde{X}} \mid o(e) = v \text{ or } t(e) = v \}$, and similarly for $X$. Then the second condition is that $\pi$ gives a bijection $S_v^\pm(\tilde{X}) \to S_{\pi_V(v)}^\pm(X)$. Then we pick $E_{\tilde{X}}^+$ to be the set of edges $e$ such that $\phi_E(e) \in E_X^+$.
\end{rem}

The theory of covering maps is a version of Galois theory. We refer the reader to \cite{Terras} for a more in depth version of the discussion that follows, including proofs of the main theorems. We record the following few facts and notations for later use.

Let $\pi \colon \tilde{X} \to X$ be a covering map. The automorphism group of $\pi$ is the group $\Aut(\tilde{X}/X) = \{\sigma \in \Aut(\tilde{X}) \mid \pi \circ \sigma = \pi \}$. If $\tilde{X}$ and $X$ are both finite graphs, then $\pi$ is $n$-to-one for some $n$. We call this the degree, and denote it $[\tilde{X}:X]$. When $\tilde{X}$ is connected, we have the bound $\#\Aut(\tilde{X}/X) \leq [\tilde{X}:X]$; if $\#\Aut(\tilde{X}/X) = [\tilde{X}:X]$, then we say $\pi$ is Galois.

\begin{defn}
Let $\pi \colon Y \to X$ and $\rho \colon Z \to X$ be two covering maps. A morphism of covering maps is a graph morphism $\phi \colon Y \to Z$ such that $\pi = \rho \circ \phi$. If $\phi$ is invertible, we say that $\pi$ and $\rho$ are isomorphic. If $\phi$ is surjective, we say that $\rho$ is a subcover of $\pi$.
\end{defn}

Note that a morphism of covering graphs is necessarily oriented. Compare this definition with Definition 7 from \cite{Gonet}.

\subsection{Constructing Covers: Voltage Graphs}
We construct covering graphs by using the following tool.

\begin{defn}
A $G$-valued voltage assignment for a graph $X$ is a function $\alpha \colon \vec{E}_X \to G$ satisfying $\alpha(\iota(e)) = \alpha(e)\inv$.
\end{defn}

If the graph $X$ is oriented, the voltage assignment function $\alpha$ is determined by its values on $E^+$. Given a $G$-valued voltage assignment $\alpha$ for a graph $X$, we may refer to the triple $(X,G,\alpha)$ as a voltage graph.

The purpose of a voltage assignment is to build the derived graph; if it is connected, the derived graph will be a Galois cover of the base graph $X$ with Galois group $G$. Let $X = (V,\vec{E},o,t,\iota)$ be a graph, and fix a $G$-valued voltage assignment $\alpha$ for $X$. The derived graph is the graph $\tilde{X} = (\tilde{V},\tilde{E},\tilde{o},\tilde{t},\tilde{\iota})$ where $\tilde{V} = V \times G$, $\tilde{E} = \vec{E} \times G$, and the incidence functions are defined by the following formulas:
\begin{equation}
	\tilde{o}(e,g) = (o(e),g), \qquad \tilde{t}(e,g) = (t(e),g\alpha(e)), \qquad \tilde{\iota}(e,g) = (\iota(e),g\alpha(e)).
\end{equation}

\begin{rem}\label{rem:derivedgraphisgalois}
We get a graph morphism $\pi \colon \tilde{X} \to X$ with $\pi_V(v,g) = v$ and $\pi_E(e,g) = e$. If $G$ is finite, this has degree $[\tilde{X}:X] = \# G$. There is an action of $G$ on $\tilde{X}$ given by the formulas $h \cdot (v,g) = (v,hg)$ and $h \cdot (e,g) = (e,hg)$; this embeds $G$ into $\Aut(\tilde{X}/X)$. If $\tilde{X}$ is connected, then this shows that $\tilde{X}$ is a Galois cover of $X$.
\end{rem}

\begin{rem}
For the derived graph $\tilde{X}$ of a voltage graph $(X,G,\alpha)$ to be connected, it is neccessary but not sufficient that the image of $\alpha$ generate $G$. For example, if we drop the condition that $G$ be finite, there is a universal abelian voltage assignment $\alpha \colon \vec{E} \to \Z E^+$. Certainly the image of $\alpha$ generates $\Z E^+$, but in this case, there will be a path from the vertex $(v,0)$ to the vertex $(v,g)$ only if $g \in H_1(X,\Z) \subset \Z E^+$.
\end{rem}

We conclude this section with the statement of \cite[Theorem 4]{Gonet}, which explains the focus on voltage assignments for the construction of Galois covers.

\begin{thm}\label{thm:gonet4}
Let $(X,G,\alpha)$ be a voltage graph with $\tilde{X}$ the derived graph. If $\tilde{X}$ is connected, then $\tilde{X}/X$ is a normal cover with $\Gal(\tilde{X}/X) \cong G$. Conversely, given a Galois cover $\pi \colon \tilde{X} \to X$, then $\tilde{X}$ is the derived graph for some $G$-valued voltage assignment $\alpha$ on $X$.
\end{thm}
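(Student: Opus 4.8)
\section*{Proof proposal}

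The two directions are quite different in character. For the forward direction, essentially everything is already recorded in Remark \ref{rem:derivedgraphisgalois}: the construction produces a morphism $\pi \colon \tilde{X} \to X$, and one checks directly from the incidence formulas that $\pi$ is a covering map. Surjectivity of $\pi_V$ is immediate, and for a vertex $(v,g)$ the incident edges are exactly $(e,g)$ with $o(e)=v$ together with $(e,g\alpha(e)\inv)$ with $t(e)=v$, which $\pi_E$ carries bijectively onto the edges incident to $v$. The $G$-action $h\cdot(v,g)=(v,hg)$, $h\cdot(e,g)=(e,hg)$ embeds $G$ into $\Aut(\tilde{X}/X)$. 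To upgrade this to $\Gal(\tilde{X}/X)\cong G$ when $\tilde{X}$ is connected, I would combine this embedding with the general bound $\#\Aut(\tilde{X}/X)\le[\tilde{X}:X]=\#G$ recorded earlier: since $\#G\le\#\Aut(\tilde{X}/X)\le\#G$, the embedding is an isomorphism and $\pi$ is Galois.

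For the converse, set $G=\Gal(\tilde{X}/X)$, which acts on $\tilde{X}$ by the definition of a Galois cover. The crucial structural input is that this action is simply transitive on each fiber of $\pi$. Freeness is a rigidity statement: if $\sigma\in\Aut(\tilde{X}/X)$ fixes a single vertex $\tilde{v}$, then by the local bijection in the definition of a covering map $\sigma$ fixes every edge incident to $\tilde{v}$, hence every neighboring vertex, so $\sigma=\mathrm{id}$ by connectedness of $\tilde{X}$. Transitivity then follows by counting: each orbit in a fiber has size $\#G$ by freeness, while the fiber has size $[\tilde{X}:X]=\#\Aut(\tilde{X}/X)=\#G$, so there is a single orbit. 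Choosing a base lift $\tilde{v}\in\pi_V\inv(v)$ for each $v\in V$, simple transitivity gives a bijection $\psi_V\colon V\times G\to\tilde{V}$, $\psi_V(v,g)=g\cdot\tilde{v}$.

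Next I would build $\alpha$ and the matching identification on edges. For each $e\in\vec{E}_X$, the covering condition provides a unique lift $\tilde{e}$ whose origin is the chosen base lift of $o(e)$; its terminus lies over $t(e)$, so $\tilde{t}(\tilde{e})=\psi_V(t(e),\alpha(e))$ for a unique $\alpha(e)\in G$, which I take as the definition of the voltage assignment. Setting $\psi_E(e,g)=g\cdot\tilde{e}$ gives a bijection $\vec{E}_X\times G\to\tilde{E}$, and because $G$ acts by graph automorphisms one computes $\tilde{o}(\psi_E(e,g))=\psi_V(o(e),g)$ and $\tilde{t}(\psi_E(e,g))=\psi_V(t(e),g\alpha(e))$, which are exactly the incidence formulas of the derived graph. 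Comparing the two lifts of $\iota(e)$ that share a common origin and invoking uniqueness of lifts then yields both $\alpha(\iota(e))=\alpha(e)\inv$ (so $\alpha$ is a genuine voltage assignment) and $\tilde{\iota}(\psi_E(e,g))=\psi_E(\iota(e),g\alpha(e))$. Hence $\psi=(\psi_V,\psi_E)$ is an isomorphism from the derived graph of $(X,G,\alpha)$ to $\tilde{X}$ commuting with the projections to $X$.

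The main obstacle is the converse, and within it the simple transitivity of the deck action, especially the freeness/rigidity step, which is what allows the abstract Galois group to coordinatize the fibers. Everything afterward is bookkeeping with the unique-lifting property, but it must be carried out carefully to reproduce the precise twist $g\mapsto g\alpha(e)$ appearing in $\tilde{t}$ and $\tilde{\iota}$ rather than an opposite convention; keeping the handedness of the $G$-action consistent with right multiplication by $\alpha(e)$ in the derived-graph formulas is the one place where a side error is easy to make.
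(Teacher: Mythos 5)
Your argument is correct. Note, though, that the paper does not prove this statement at all: it is quoted verbatim as \cite[Theorem 4]{Gonet}, so there is no in-paper proof to compare against. Your forward direction is exactly the content the paper records in Remark \ref{rem:derivedgraphisgalois} (the $G$-action embeds $G$ into $\Aut(\tilde{X}/X)$, and the bound $\#\Aut(\tilde{X}/X)\le[\tilde{X}:X]=\#G$ for connected $\tilde{X}$ forces equality), and your converse is the standard deck-transformation argument: freeness by rigidity, simple transitivity on fibers by counting, coordinatization of fibers by $G$, and the voltage $\alpha(e)$ read off from the terminus of the distinguished lift. Your sign/handedness checks for $\tilde{t}$ and $\tilde{\iota}$, including the derivation of $\alpha(\iota(e))=\alpha(e)\inv$ from uniqueness of lifts, come out consistent with the derived-graph formulas as stated in the paper.

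One small caveat worth flagging: both the rigidity step and the ``unique lift of $e$ with prescribed origin'' step lean on a unique-path-lifting property that is slightly delicate under the paper's definition of a covering map, which only requires a bijection on the undirected stars $S_v^+$. For a loop $e$ at $v$ (and the paper's running examples are bouquets, so loops are the typical case), the unique edge of $S_{\tilde{v}}^+(\tilde{X})$ lying over $e$ could a priori have $\tilde{v}$ as its terminus rather than its origin, so you should either strengthen the local condition to a bijection on directed stars $\{e : o(e)=v\}$ or add a sentence handling loops via $\iota$. This is a definitional wrinkle rather than a gap in the idea, but it is exactly the kind of place where the bookkeeping you warn about can go wrong.
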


\subsection{Intermediate Voltage Covers}\label{sec:generaltowers}
Given a group homomorphism $f \colon G \to H$ and a voltage graph $(X,G,\alpha)$, we can build a new voltage graph $(X,H,f \circ \alpha)$. There is a morphism on the level of derived graphs given by applying $f$ to the second component. Write $\tilde{X}_G$ for the derived graph of the voltage assignment $\alpha$ and $\tilde{X}_H$ for the derived graph of the voltage assignment $f \circ \alpha$; if $\tilde{X}_G$ is connected and $f$ is surjective, then $\tilde{X}_H$ will be connected as well, and it will be an intermediate cover via the morphism of covering graphs described above.

\begin{ex}\label{ex:intermediatevoltagecovers}
Let $X = B_2 = (V,\vec{E},o,t,\iota)$ be the bouquet of two loops; here $V = \{*\}$ and $\vec{E} = \{e_1,\overline{e}_1,e_2,\overline{e}_2\}$, so that there are two undirected edges, each of which is a loop on the unique vertex. Let $G = \Z/4\Z$, and fix the voltage assignment $\alpha(e_1) = \alpha(e_2) = 1$. The derived graph $\tilde{X}$ has vertices $\tilde{V}=\{v_0,v_1,v_2,v_3\}$ indexed by $G$; the edges are as in the table below, with only the forward oriented edges listed.
\begin{equation}
	\begin{array}{c|c c c c c c c c}
		e & e_{1,0} & e_{1,1} & e_{1,2} & e_{1,3} & e_{2,0} & e_{2,1} & e_{2,2} & e_{2,3} \\
		\hline
		o(e) & v_0 & v_1 & v_2 & v_3 & v_0 & v_1 & v_2 & v_3 \\
		t(e) & v_1 & v_2 & v_3 & v_0 & v_1 & v_2 & v_3 & v_0
	\end{array}
\end{equation}
There is one intermediate cover $X_I$ of degree $2$ over $X$, given by the projection from $G$ to $\Z/2\Z$. We get two vertices $V_I = \{w_0,w_1\}$ indexed by the elements of $\Z/2\Z$, and four undirected edges; they are attached as in the table below, with only the forward oriented edges listed.
\begin{equation}
	\begin{array}{c|c c c c}
		e & e_{1,0} & e_{1,1} & e_{2,0} & e_{2,1}  \\
		\hline
		o(e) & w_0 & w_1 & w_0 & w_1 \\
		t(e) & w_1 & w_0 & w_1 & w_0 
	\end{array}
\end{equation}
The covering morphism sends both vertices to the unique vertex $*$ of $X$, and sends the edges $e_{i,g}$ to the loop $e_i$. See Figure \ref{fig:intermediatecovers} for a visual representation of the three graphs $\tilde{X}$, $X_I$, and $X$.

\begin{figure}
\centering
\includegraphics[width=5in]{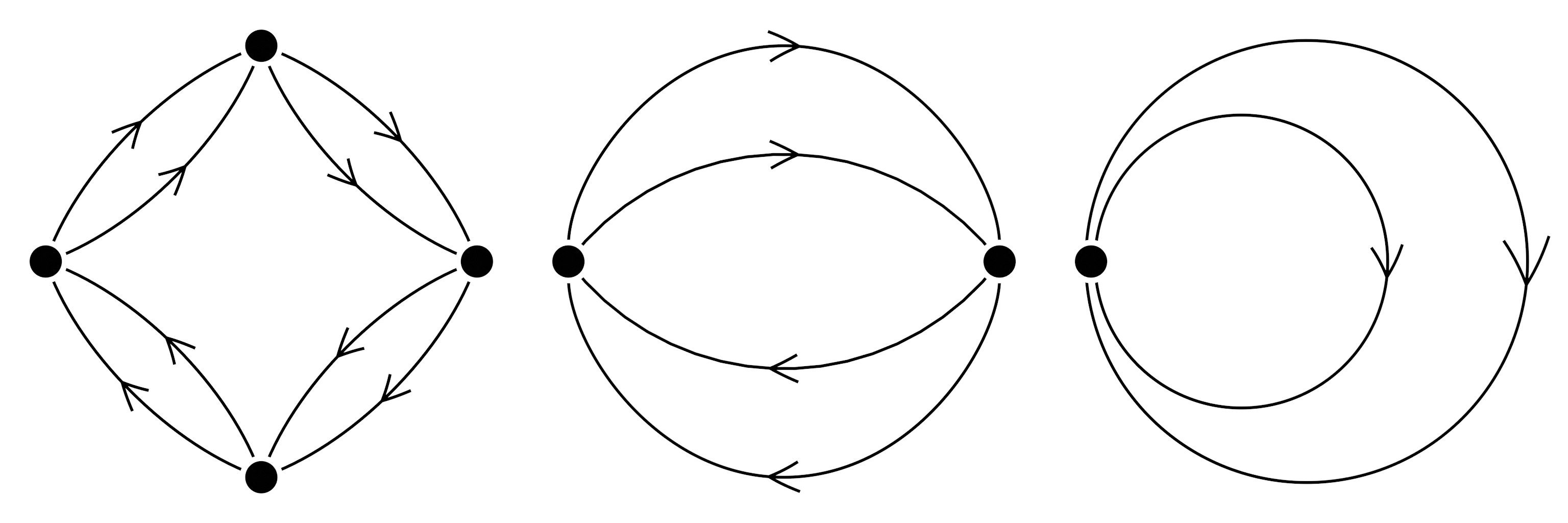}
\caption{From left to right, the graphs $\tilde{X}$, $X_I$, and $X$ from Example \ref{ex:intermediatevoltagecovers}.}
\label{fig:intermediatecovers}
\end{figure}
\end{ex}

\subsection{Relationships Between Homology Groups for Abelian Covers}\label{sec:homologyincovers}
The absolute Galois group of a graph $X$ is isomorphic to the profinite completion of its fundamental group $\pi_1(X,v_0)$. If we restrict to abelian covers $\pi \colon \tilde{X} \to X$, we will always have that $\Gal(\tilde{X}/X)$ is a quotient of the abelianization $\pi_1(X,v_0)^{ab} \cong H_1(X,\Z)$. In fact, we can describe exactly which quotient: pushforward along $\pi$ gives a morphism $\pi_* \colon H_1(\tilde{X},\Z) \to H_1(X,\Z)$, whose cokernel is naturally isomorphic to $\Gal(\tilde{X}/X)$.

\begin{ex}
Let $X = B_2$, and $\tilde{X}$ be the Galois cover of $X$ with Galois group $\Z/4\Z$ as in Example \ref{ex:intermediatevoltagecovers} above.

We have that $H_1(\tilde{X},\Z) \cong \Z^5$, and it is generated by the following cycles:
\begin{equation}
	c_0 = e_{1,0} - e_{2,0}, \quad c_1 = e_{1,1} - e_{2,1}, \quad c_2 = e_{1,2} - e_{2,2}, \quad c_3 = e_{1,3} - e_{2,3}, \quad 
\end{equation}
along with the cycle $c^\prime = e_{1,0}+e_{1,1}+e_{1,2}+e_{1,3}$. Pushed forward, these become the following elements of $H_1(X,\Z) = \Z e_1 \oplus \Z e_2$:
\begin{equation}
	\begin{array}{c|ccccc}
		c & c_0 & c_1 & c_2 & c_3 & c^\prime \\
		\hline
		\pi_*(c) & e_1 - e_2 & e_1 - e_2 & e_1 - e_2 & e_1 - e_2 & 4e_1
	\end{array}
\end{equation}
Their images generate the subgroup $\pi_*(H_1(\tilde{X},\Z)) = \langle e_1-e_2, 4e_1 \rangle$; the quotient of $H_1(X,\Z) = \Z\{e_1,e_2\}$ by this subgroup identifies $e_1$ with $e_2$ and then kills $4e_1$, so
\begin{equation}
H_1(X,\Z)/\pi_*(H_1(\tilde{X},\Z)) \cong \Z/4\Z.
\end{equation}
This was $\operatorname{Gal}(\tilde{X}/X)$ by construction.
\end{ex}

\section{The Iwasawa Module}
\label{sec:module}

\subsection{$\Z_p$-covers}
Classically, Iwasawa theory concerns itself with infinite field extensions, specifically those with Galois group isomorphic to $\Z_p$. In the present work, all graphs are assumed to be finite, which means that we will never have a covering map with infinite Galois group. However, we will still obtain some infinitary results by considering infinite towers of finite graphs.

\begin{defn}\label{def:ZpTower}
A $\Z_p$-tower of graphs is a collection of graphs $\{X_i\}_{i=0}^\infty$ and Galois covering morphisms $X_0 \gets X_1 \gets X_2 \gets \dots$ with the property that $\operatorname{Gal}(X_j/X_i) \cong \Z/p^{j-i}\Z$ for each $j \geq i$.
\end{defn}

\begin{rem}\label{rem:getZpTower}
We will create $\Z_p$-towers of graphs by assigning voltages in the infinite group $\Z_p$ and applying the method in Section \ref{sec:generaltowers} to produce the infinite tower. Careful application of Theorem \ref{thm:gonet4} shows that all $\Z_p$-towers of graphs can be obtained in this way.
\end{rem}

Write $\Gamma_n = \operatorname{Gal}(X_n/X_0)$, and let $\Gamma = \varprojlim \Gamma_n$, so that $\Gamma \cong \Z_p$. Fix a topological generator $\gamma$ of $\Gamma$, so that the class of $\gamma$ in the quotient $\Gamma_n$ will generate $\Gamma_n$. In addition, let $\Gamma^{(n)} = \overline{\langle \gamma^{p^n} \rangle} \subset \Gamma$ be the kernel of the projection to $\Gamma_n$. We let $\Lambda = \varprojlim \Z_p[\Gamma_n]$ be the completed group ring, which we also denote as $\Lambda = \Z_p[[\Gamma]]$. $\Lambda$ is isomorphic to the standard power series ring $\Z_p[[T]]$ by the continuous homomorphism $\gamma \mapsto T+1$.

\begin{rem}
Since we focus on finite graphs here, there is no graph $X_\infty = \varprojlim X_n$. However, there is a theory of profinite graphs into which this graph would fit. For intuition, one may think of an infinite graph $X_\infty$ living at the top of the $\Z_p$ tower; this is done in, e.g., \cite{Kataoka,KM}, though this is not necessary for what we do here. With this, we have the added intuition that $\Gamma^{(n)} = \operatorname{Gal}(X_\infty/X_n)$.
\end{rem}

\subsection{Cycles, Stars, and Homology}\label{sec:CSHinTowers}
As foreshadowed in the Remark \ref{rem:getZpTower}, we fix a voltage graph $(X_0,\Gamma,\alpha)$. Let $\alpha_n$ be the composition of $\alpha$ with the projection $\Gamma \to \Gamma_n$, and let $X_n$ be the derived graph of the voltage assignment $(X_0,\Gamma_n,\alpha_n)$; we assume that we have chosen the voltage assignment such that each $X_n$ is connected, in which case $X_n$ is a Galois cover of $X_0$ with Galois group $\Gamma_n \cong \Z/p^n\Z$. Thus we have a $\Z_p$-tower of graphs in the sense of Definition \ref{def:ZpTower}.

For each $n$, let $V_n$ be the set of vertices of $X_n$ and $\vec{E}_n$ its set of edges. Fix an orientation $E_0^+$ for $X_0$, and note that this determines an orientation $E_n^+$ for each $X_n$. For each $n$, write $C(n) = \Z_p E_n^+$ and $D(n) = \Z_p V_n$.

Now fix $n$, and for each vertex and each edge of $X_0$ pick a lift to $X_n$. Note that, for a fixed vertex $v_0 \in V_0$, there are exactly $p^n$ choices for the lift $v_n$, and for a fixed choice of the lift $v_n$, the set of choices is $\{g v_n \mid g \in \Gamma_n\}$. Similarly, for a fixed (positively oriented) edge $e_0 \in E_0^+$, there are exactly $p^n$ choices for the lift $e_n$, and for a fixed choice of the lift $e_n$, the set of choices is $\{g e_n \mid g \in \Gamma_n\}$. From this, we see that a choice of lifts gives an isomorphism $C(n) \cong \Z_p[\Gamma_n] E_0^+$ and $D(n) \cong \Z_p[\Gamma_n] V_0$.

Now fix a compatible system of lifts $v_n \in V_n$ for each $n$ such that, for every $m \leq n$, $v_n$ maps to $v_m$ under the projection from $X_n$ to $X_m$. Then do the same with each (positively oriented) edge $e_0 \in E_0^+$. The above line of reasoning, along with the compatibility of the lifts, gives a compatible system of isomorphisms $C(n) \cong \Z_p[\Gamma_n] E_0^+$ and $D(n) \cong \Z_p[\Gamma_n] V_0$.

Recall the maps $d \colon \Z E^+ \to \Z V$ and $s \colon \Z V \to \Z E^+$ from Section \ref{subsec:edges}; let $d$ and $s$ also denote the maps on the extensions of scalars to $\Z_p$ as in this section. Note that these functions are Galois equivariant --- in fact, the identifications $\Z_p E_n^+ \cong \Z_p[\Gamma_n] E_0^+$ and $\Z_p V_n \cong \Z_p[\Gamma_n] V_0$ give commuting diagrams
\begin{equation}\nonumber
	\begin{tikzpicture}
	
	\matrix (m) [matrix of math nodes,row sep = 2em,column sep = 3em,minimum width = 3em]
	{
	\Z_p E_n^+ & \Z_p V_n && \Z_p V_n & \Z_p E_n^+  \\
	\Z_p[\Gamma_n] E_0^+ & \Z_p[\Gamma_n] V_0 && \Z_p[\Gamma_n] V_0 & \Z_p[\Gamma_n] E_0^+  \\
	};
	\path[-stealth]
	(m-1-1) edge node [above] {$d$} (m-1-2)
	(m-2-1) edge node [above] {$d$} (m-2-2)
	(m-1-1) edge (m-2-1)
	(m-1-2) edge (m-2-2)

	(m-1-4) edge node [above] {$s$} (m-1-5)
	(m-2-4) edge node [above] {$s$} (m-2-5)
	(m-1-4) edge (m-2-4)
	(m-1-5) edge (m-2-5)	
	;
	\end{tikzpicture}
\end{equation}
We deduce from this that $H_1(X_n,\Z_p)$ can be computed as the kernel of $d \colon \Z_p[\Gamma_n] E_0^+ \to \Z_p[\Gamma_n]V_0$, and that $S(X_n,\Z_p)$ can be computed as the image of $s \colon \Z_p[\Gamma_n] V_0 \to \Z_p[\Gamma_n] E_0^+$. Note that $S(X_n,\Z_p) = \Z_p[\Gamma_n] S(X_0,\Z_p)$, but $H_1(X_n,\Z_p) \subset H_1(X_0,\Z_p)$ will be a submodule with quotient $H_1(X_0,\Z_p)/H_1(X_n,\Z_p) \cong \Gal(X_n/X_0) = \Gamma_n$.

\subsection{The Module}
Now we patch together the finitary data from the previous section into a $\Lambda$-module. Write $C(\infty) = \Lambda E_0^+$ and $D(\infty) = \Lambda V_0$, noting that we still have the maps $d$ and $s$ on this level. Write $H(\infty) = \ker\left(d \colon C(\infty) \to D(\infty) \right)$ and $S(\infty) = \im\left(s \colon D(\infty) \to C(\infty) \right)$. Since these are $\Lambda$-submodules of $C(\infty)$, we can define $J(\infty) = C(\infty)/\left(S(\infty) + H(\infty) \right)$.

\begin{rem}
For the sake of intuition, let $X_\infty = \varprojlim X_n$ be the profinite graph at the top of the tower. We can look at $C(\infty) = \Lambda E_0^+$ as some completion of $\Z_p E_\infty^+$, with $S(\infty)$ some completed group of stars and $H(\infty)$ some completed version of $H_1(X_\infty,\Z_p)$. In this lens, we see that $J(\infty)$ should be (a completed version of the $p$-part of) the Jacobian of $X_\infty$; however, it is not quite so simple on the level of the finite graphs. In fact, while this could be thought of as $\J_e(X_\infty)[p^\infty]$, the corresponding $\Z_p[\Gamma_n]$-modules $J(\infty) \otimes_\Lambda \Z_p[\Gamma_n]$ at each finite step of the tower will be an extension of $\J(X_n)[p^\infty]$ by $\Gamma^{(n)}$. The discrepancy comes from the fact that $H_1(X_n,\Z_p)/H(\infty) \cong \Gal(X_\infty/X_n) = \Gamma^{(n)} \cong \Z_p$.
\end{rem}

Write $J(n) = J(\infty) \otimes_\Lambda \Z_p[\Gamma_n]$, and similarly define $C(n)$, $D(n)$, $H(n)$, and $S(n)$. Since tensor products preserve cokernels, we have that $J(n) \cong C(n)/[H(n)+S(n)]$. Note that $C(n) \cong \Z_p E_n^+$, $D(n) = \Z_p V_n$, and $S(n) = S(X_n,\Z_p)$ by the above argument, but $H(n) \subset H_1(X_n,\Z_p)$ is just a submodule. We get an exact sequence
\begin{equation}
	0 \to H(n) \to H_1(X_n,\Z_p) \to J(n) \to \J(X_n) \otimes \Z_p \to 0.
\end{equation}
Since $\J(X_n)$ is finite, we have that $\J(X_n) \otimes \Z_p \cong \J(X_n)[p^\infty]$ is the $p$-power torsion subgroup of $\J(X_n)$. We can also replace $H_1(X_n,\Z_p)$ by the quotient $H_1(X_n,\Z_p)/H(n) \cong \Gamma^{(n)}$ to obtain the following short exact sequence:
\begin{equation}
	0 \to \Gamma^{(n)} \to J(n) \to \J(X_n)[p^\infty] \to 0.
\end{equation}
We have proven the following.

\begin{thm}\label{thm:themodule}
There is a module $J(\infty)$ over the Iwasawa algebra $\Lambda$ whose coinvariants $J(n) = J(\infty) \otimes_\Lambda \Z_p[\Gamma_n]$ are isomorphic to an extension of the $p$-power torsion subgroup of the Jacobian $\J(X_n)[p^\infty]$ by the group $\Gamma^{(n)}$.
\end{thm}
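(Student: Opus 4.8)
The plan is to take $J(\infty) = C(\infty)/(S(\infty)+H(\infty))$ as constructed above and to read off its coinvariants by base change. Since $J(\infty)$ is presented as the cokernel of the map $S(\infty)\oplus H(\infty)\to C(\infty)$, and $-\otimes_\Lambda\Z_p[\Gamma_n]$ is right exact, applying it yields $J(n)\cong C(n)/[S(n)+H(n)]$, where $S(n),H(n)$ denote the images in $C(n)$ of the base-changed submodules. The identifications recorded in Section~\ref{sec:CSHinTowers} let me replace the abstract modules by concrete ones: $C(n)\cong\Z_p E_n^+$, $D(n)\cong\Z_p V_n$, and $S(n)=S(X_n,\Z_p)$, so that $J(n)$ becomes a quotient of $\Z_p E_n^+$ by $H(n)+S(X_n,\Z_p)$.

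The essential point --- and the step I expect to carry all the difficulty --- is that $H(n)$ is \emph{not} the full homology $H_1(X_n,\Z_p)=\ker(d\colon C(n)\to D(n))$, but only a $\Lambda$-submodule of it, because $-\otimes_\Lambda\Z_p[\Gamma_n]$ fails to be left exact on the defining sequence $0\to H(\infty)\to C(\infty)\xrightarrow{d}D(\infty)$. I would therefore compare the two quotients $C(n)/[H(n)+S(n)]$ and $C(n)/[H_1(X_n,\Z_p)+S(n)]$; the latter is, by the edge description of Proposition~\ref{thm:comparejacobians}, exactly $\J(X_n)\otimes\Z_p$. Since $H(n)\subseteq H_1(X_n,\Z_p)$ and, for the connected graph $X_n$, one has $H_1(X_n,\Z_p)\cap S(X_n,\Z_p)=0$ (any element of the intersection is $s(y)$ with $\Delta y=0$, forcing $y\in\Z_p\cdot\mathbf{1}$ and hence $s(y)=0$), the modular law gives the four-term exact sequence
\begin{equation}
0\to H(n)\to H_1(X_n,\Z_p)\to J(n)\to\J(X_n)\otimes\Z_p\to 0.
\end{equation}

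It then remains to identify the cokernel $H_1(X_n,\Z_p)/H(n)$. Here I would invoke the Galois-theoretic computation of Section~\ref{sec:homologyincovers}, in which pushforward of cycles realizes the homology of a cover as a submodule of the homology of the base with the Galois group as cokernel; interpreting $H(\infty)$ as the homology of the profinite graph at the top of the tower gives $H_1(X_n,\Z_p)/H(n)\cong\Gal(X_\infty/X_n)=\Gamma^{(n)}\cong\Z_p$. Finally, since $\J(X_n)$ is finite its completion $\J(X_n)\otimes\Z_p$ is just $\J(X_n)[p^\infty]$, so the four-term sequence collapses to
\begin{equation}
0\to\Gamma^{(n)}\to J(n)\to\J(X_n)[p^\infty]\to 0,
\end{equation}
exhibiting $J(n)$ as an extension of $\J(X_n)[p^\infty]$ by $\Gamma^{(n)}$, as claimed.
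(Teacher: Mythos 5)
Your proposal is correct and follows essentially the same route as the paper: base-change the presentation $J(\infty)=C(\infty)/(S(\infty)+H(\infty))$, identify $C(n)$, $S(n)$ concretely, observe that $H(n)$ is only a submodule of $H_1(X_n,\Z_p)$, and extract the four-term exact sequence whose outer terms are $\Gamma^{(n)}$ and $\J(X_n)[p^\infty]$. The one place you go beyond the paper is in actually justifying the four-term sequence via the modular law and the vanishing of $H_1(X_n,\Z_p)\cap S(X_n,\Z_p)$; the paper asserts that sequence without this check, so your argument is a welcome tightening rather than a departure.
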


\begin{rem}\label{rem:actiononliftedspanningtrees}
For any cover $\pi \colon \tilde{X} \to X$ with abelian Galois group $G$ we can build a natural $G$-action on $\J(\tilde{X})$; by the same arguments above, taking coinvariants will give us an extension of $\J(X)$ by $G$. Call this $\J(\pi)$.

In Section \ref{sec:spanningtrees}, we referenced the fact that the set of spanning trees on a graph $X$ is a torsor for the Jacobian $\J({X})$, which means that we can count spanning trees on ${X}$ by finding $\#\J({X})$. One may ask what $\J(\pi)$ counts.

It is not too difficult to see the fact that $\#\J(\pi)$ is the number of lifts of spanning trees on $X$ to (non-spanning) subtrees of $\tilde{X}$: $\#\J(B)$ is the number of spanning trees on $B$, and $\# G$ is how many different choices we have for a lift of any chosen basepoint, so there are $(\#\J(X))(\#G) = \#\J(\pi)$ many lifts. However, it seems more difficult to define such an action intrinsically on $\tilde{X}$, due to the fact that rotor-routing essentially needs the trees to be spanning in order for the process to be well-defined.

We ask a similar question here to the question in Remark \ref{rem:SpanningTreesAreATorsor}: Is there a natural action of $\J(\pi)$ on the set of lifts of spanning trees from $X$ to $\tilde{X}$?
\end{rem}

\section{Asymptotics}
\label{sec:examples}

\subsection{Difficulties with the Asymptotics}\label{sec:asymp}
The asymptotics for $\#\J(X_n)$ are complicated by the fact that $\#J(n) = \infty$ for all $n$. We need to leverage the connection between different homology groups in the tower to extract the finite groups $\J(X_n)[p^\infty]$ from the infinite groups $J(n)$. It is difficult to do in generality, which is why we dedicate the next section to an example. In this section, we give an idea of how one might deal with this discrepancy in a uniform way.

Let $(X_0,\Gamma,\alpha)$ be a voltage assignment as in the beginning of Section \ref{sec:CSHinTowers}, with the $\Z_p$-tower $\{X_n\}_{n=0}^\infty$ constructed as described there. Pick a base vertex $v_0$ on $X_0$, and let $v_1$ be a chosen lift of $v_0$ to $X_1$. Since $X_1$ is connected, there is a path $\delta_1=e_1\cdots e_\ell$ that connects $v_1$ to $\gamma v_1$. This projects to a path $\delta_0$ on $X_0$, and $\delta_0 \in H_1(X_0,\Z_p)$.

This $\delta_0$ will generate $H_1(X_0,\Z_p)/H(0)$ as a $\Z_p$-module. In fact, if we let $\delta_n$ be any lift of $\delta_0$ to a path on $X_n$, then $\sum_{i=0}^{p^n-1} \gamma^i \delta_n$ will be a loop on $X_n$, and it will generate $H_1(X_n,\Z_p)/H(n)$ as a $\Z_p$-module. Writing $N_n(\gamma) = \sum_{i=0}^{p^n-1}\gamma^i$, we have argued that $\J(X_n)[p^\infty] \cong J(n)/\Z_p N_n(\gamma)\delta_0$. In the next section, we describe how to use this isomorphism to get asymptotics for $\#\J(X_n)[p^\infty]$ in an example.

\subsection{The Example} \label{sec:theexample}
Let $X_0 = B_{k}$ be the bouquet of $k$ loops, with $V_{0} = \{*\}$ and $\vec{E}_0 = \{e_0,\overline{e}_0,\dots,e_{k-1},\overline{e}_{k-1}\}$. Choose the orientation $E_0^+ = \{e_0,\dots,e_{k-1}\}$. Then fix a prime $p$. Let $\Gamma$ be a group isomorphic to $\Z_p$, let $\gamma$ be a chosen topological generator, and consider the $\Z_p$ tower of graphs determined by the voltage assignment $\alpha \colon E_0^+ \to \Gamma$ with $\alpha(e_i) = \gamma$ for each $i$. Consider the projection $f_n \colon \Gamma \to \Gamma_n = \Gamma/\langle\gamma^{p^n}\rangle$. Replacing $\alpha$ by $\alpha_n = f_n \circ \alpha$ gives rise to a voltage graph $(X_0,\Gamma_n,\alpha_n)$ whose derived graph we will call $X_n$.

In fact, $X_n$ is fairly easy to describe, as is the projection map $\pi_n \colon X_n \to X_0$. $X_n$ has $p^n$ vertices, indexed by the elements of $\Z/p^n\Z$, and each vertex $v_i$ is connected to the vertex $v_{i+1}$ by $k$ edges $e_{i,j}$ for $j=0,\dots,k-1$. The projection map $\pi_n$ sends every vertex of $X_n$ to the unique vertex of $X_0$, and sends the edge $e_{i,j}$ of $X_n$ to the edge $e_j$ of $X_0$.

We have $C(\infty) = \Lambda\{e_0,\dots,e_{k-1}\}$ is the free $\Lambda$ module on the basis $E_0^+$. $S(\infty)$ is spanned over $\Lambda$ by the star into $v_1$, which is $e_0 + \dots + e_{k-1} - \gamma e_0 - \dots - \gamma e_{k-1}$. Finally, $H(\infty)$ is spanned over $\Lambda$ by the elements $e_i - e_0$, where $i=1,\dots,k-1$. There is a map $C(\infty) \to \Lambda/(k-k\gamma)$ which sends each edge $e_i$ to $1$; the kernel certainly contains $H(\infty)$ since it kills the differences $e_i - e_0$. Once those differences are killed, the generator of $S(\infty)$ becomes $k-k\gamma$, so the map factors through $J(\infty)$; it is not difficult to show that the induced map is an isomorphism $J(\infty) \xrightarrow{\sim} \Lambda/(k-k\gamma)$.

As we noted before, $H(n) = H(\infty) \otimes_\Lambda \Z_p[\Gamma_n]$ is not all of $H_1(X_n,\Z_p)$, and in fact the quotient $H_1(X_n,\Z_p)/H(n)$ will be isomorphic to $\Z_p$. Following the ideas in Section \ref{sec:asymp}, we notice that it is spanned over $\Z_p$ by the element $N_n(\gamma)e_0$, where $N_n(\gamma) = 1+\gamma+\dots+\gamma^{p^n-1}$. Writing $J(n) = J(\infty) \otimes_\Lambda \Z_p[\Gamma_n]$, we have 
\begin{equation}
\J(X_n)[p^\infty] \cong J(n)/H_1(X_n,\Z_p) \cong \Z_p[\Gamma_n]/(k-k\gamma,1+\gamma+\dots+\gamma^{p^n-1}).
\end{equation}

If $k$ is prime to $p$, the calculation is easier. $k$ is a unit in $\Z_p$, so
\begin{equation}
\begin{array}{rcl}
	\J(X_n)[p^\infty] &\cong& \Z_p[\Gamma_n]/(k-k\gamma,1+\dots+\gamma^{p^n-1})\\[6pt]
	&\cong& \Z_p[\Gamma_n]/(1-\gamma,1+\dots+\gamma^{p^n-1}) \\[6pt]
	&\cong& \Z_p/(p^n).
\end{array}
\end{equation}
Thus, when $p \nmid k$, we have $\J(X_n)[p^\infty] \cong \Z/p^n\Z$, so $\#\J(X_n)[p^\infty] = p^n$.

If $p \mid k$, the relationship is more complicated. Let $u = v_p(k)$, so that $p^u$ is the largest power of $p$ that divides $k$. Then we instead write
\begin{equation}
\begin{array}{rcl}
	\J(X_n)[p^\infty] &\cong& \Z_p[\Gamma_n]/(k-k\gamma,1+\dots+\gamma^{p^n-1})\\[6pt]
	&\cong& \Z_p[\Gamma_n]/(p^u(1-\gamma),1+\dots+\gamma^{p^n-1}).
\end{array}
\end{equation}
We have a map $f = (f_1,f_2) \colon \Z_p[\Gamma_n] \to \Z_p \times (\Z/p^u\Z)[\Gamma_n]$,
\begin{equation}
	\sum_{i=0}^{p^n-1} a_i \gamma^i \mapsto \left(\sum_{i=0}^{p^n-1} a_i, \sum_{i=0}^{p^n-1} [a_i] \gamma^i \right).
\end{equation}
Here the $[a_i]$ in brackets is the class of $a_i$ in $\Z/p^u\Z$. The kernel of the first component $f_1$ is the ideal generated by $\gamma - 1$, and the kernel of $f_2$ is the ideal generated by $p^u$, so the kernel of $f = (f_1,f_2)$ is $(\gamma - 1) \cap (p^u) = (p^u(\gamma-1))$. Then the image is isomorphic to $J(n)$. This is the set of tuples
\begin{equation}
\operatorname{im}(f) = \left\{\left(m,\sum [a_i]\gamma^i \right) \mid m \equiv \sum [a_i] \pmod{p^u} \right\}
\end{equation}
The group $\J(X_n)[p^\infty]$ is a quotient of $\operatorname{im}(f)$ by the $\Z_p[\Gamma_n]$-submodule generated by $f(N_n(\gamma)) = (p^n,N_n(\gamma))$. Each coset of this submodule has a unique representative $\left(m,\sum \alpha_i \gamma^i\right)$ with $m$ an integer, $0 \leq m < p^n$. Thus, at least as a set, we have that $\J(X_n)[p^\infty]$ is isomorphic to
\begin{equation}
\left\{ \left(m,\sum_i \alpha_i \gamma^i\right) \in  \Z/p^n\Z \times (\Z/p^u\Z)[\Gamma_n] \mid m \equiv \sum_i \alpha_i \pmod{p^u} \right\}.
\end{equation}
Let $n \geq u$. For each of the $(p^u)^{p^n}$ elements $\sum_{i=0}^{p^n-1} \alpha_i \gamma^i$ of $(\Z/p^u\Z)[\Gamma_n]$, there will be $p^{n-u}$ choices for $m$. Thus $\#\J(X_n)[p^\infty] = p^{n-u} (p^u)^{p^n} = p^n(p^u)^{p^{n}-1}$.

Note that in the case $u=0$, i.e., when $p \nmid k$, we recover the simpler result that $\#\J(X_n)[p^\infty] = p^n$.

\begin{rem}
In fact, in this example, we can actually count the spanning trees of $X_n$. Recall that $X_n$ is a graph with $p^n$ vertices labeled by the elements of $\Z/p^n\Z$, where each vertex $v_i$ is connected to the vertex $v_{i+1}$ by $k$ distinct edges. We can specify a spanning tree with the following choices: first, a vertex $v_{i_0}$, and then, for all $i \neq i_0$, a choice of one of the $k$ edges that connects $v_i$ to $v_{i+1}$. This gives us a total of $p^n k^{p^n-1}$ choices. The $p$-part of this number is obtained by replacing $k$ with $p^u$, which gives us $p^n (p^u)^{p^n-1}$ as the target number. This is exactly what we calculated for $\#\J(X_n)[p^\infty]$.
\end{rem}

\begin{rem}
Though we have to make the adjustment with $H_1(X_0,\Z_p)$, we do still end up getting an asymptotic of the standard type for $\#\J(X_n)[p^\infty]$. For $n \geq u$, we have $\#\J(X_n)[p^\infty] = p^{up^n + n - u}$, which is of the form $p^{\lambda p^n + \mu n + \nu}$ for $\lambda = u$, $\mu = 1$, and $\nu = -u$.
\end{rem}

\subsection{The $L$-function}

As in \cite[Section 3.1]{HMSV}, we write $L(X_0,\chi,u)\inv = L^*(X_0,\chi,1)(u-1)^{r(\chi)} + \dots$. We can compute the values $L^*(X_0,\chi,1)$ for varying nontrivial characters $\chi$ of $\Gamma$.

By \cite[Theorem 18.15]{Terras}, we have that $L^*(X_0,\chi,1) = (-2)^{r-1}\det(D - A_\chi)$, where $r$ is the rank of the fundamental group of $X_0$, and $D$ is the degree matrix. Unfolding the definition of $A_\chi$, we obtain a matrix $A_\Gamma$ with entries in $\Z[\Gamma]$ such that $L^*(X_0,\chi,1) = \det(\chi(D-A_\Gamma)) = \chi(\det(D-A_\Gamma))$. Thus, the $p$-adic $L$-function should be $\det(D-A_\Gamma)$.

Recall that, for each $v_0 \in V_0$, we can choose a compatible system of lifts $v_n \in V_n$. Then for each $n$, let $A_{\Gamma_n}$ be a matrix whose rows and columns are indexed by the vertices of $X_0$, and whose $v_0,w_0$ entry is
\begin{equation}
	a_{v_0,w_0} = \sum_{\sigma \in \Gamma_n} \#\left\{ \text{edges connecting }v_n\text{ to }\sigma(w_n) \right\}\sigma.
\end{equation}
These $A_{\Gamma_n}$ form a compatible system of elements of $M_{\#V_0}(\Z_p[\Gamma_n])$, and so they determine an element $A_\Gamma$ of $\varprojlim M_{\#V_0}(\Z_p[\Gamma_n]) = M_{\#V_0}(\Lambda)$. This is the $A_\Gamma$ in the formula above for the $p$-adic $L$-function.

When $X_0 = B_k$, we have $\#V_0 = 1$, so the matrices are all $1 \times 1$. Using the voltage assignment as in Section \ref{sec:theexample}, we find that $A_\Gamma = k\gamma + k\gamma\inv$. As $D = 2k$, we have $L^*(X_0,-,1) = (-2)^{r-1}(2k-k\gamma-k\gamma\inv)$.

If $p=2$, the factor of $(-2)^{r-1}$ complicates things, so let $p \neq 2$. Up to unit, we can write $L^*(X_0,-,1) \approx (2k-k\gamma-k\gamma\inv) \approx k(1-\gamma)^2$. Note that this is not the generator of the characteristic ideal we found in Section \ref{sec:theexample}; however, it differs by a factor of the augmentation ideal $(1-\gamma)$, which is exactly what is predicted in \cite[Theorem 5.2, Remark 5.3]{KM}.

\appendix


\bibliographystyle{amsalpha}
\bibliography{AycockIwasawaViaEdgesBib}

\providecommand{\bysame}{\leavevmode\hbox to3em{\hrulefill}\thinspace}
\providecommand{\MR}{\relax\ifhmode\unskip\space\fi MR }
\providecommand{\MRhref}[2]{%
  \href{http://www.ams.org/mathscinet-getitem?mr=#1}{#2}
}
\providecommand{\href}[2]{#2}
\begin{thebibliography}{HMSV24}

\bibitem[BN07]{Baker}
Matthew Baker and Serguei Norine, \emph{Riemann-{R}och and {A}bel-{J}acobi
  theory on a finite graph}, Adv. Math. \textbf{215} (2007), no.~2, 766--788.
  \MR{2355607}

\bibitem[CCG15]{CCG}
Melody Chan, Thomas Church, and Joshua~A. Grochow, \emph{Rotor-routing and
  spanning trees on planar graphs}, Int. Math. Res. Not. IMRN (2015), no.~11,
  3225--3244. \MR{3373049}

\bibitem[Gon22]{Gonet}
Sophia~R. Gonet, \emph{Iwasawa theory of {J}acobians of graphs}, Algebr. Comb.
  \textbf{5} (2022), no.~5, 827--848. \MR{4511153}

\bibitem[HMSV24]{HMSV}
Kyle Hammer, Thomas~W. Mattman, Jonathan~W. Sands, and Daniel Valli\`eres,
  \emph{The special value {$u=1$} of {A}rtin-{I}hara {$L$}-functions}, Proc.
  Amer. Math. Soc. \textbf{152} (2024), no.~2, 501--514. \MR{4683834}

\bibitem[Kat23]{Kataoka}
Takenori Kataoka, \emph{Fitting ideals of jacobian groups of graphs}, 2023.

\bibitem[KM22]{KM}
Sören Kleine and Katharina Müller, \emph{On the growth of the jacobian in
  $z_p^l$-voltage covers of graphs}, 2022.

\bibitem[MV23]{MV2}
Kevin McGown and Daniel Valli\`eres, \emph{On abelian {$\ell$}-towers of
  multigraphs {II}}, Ann. Math. Qu\'{e}. \textbf{47} (2023), no.~2, 461--473.
  \MR{4645700}

\bibitem[Ter11]{Terras}
Audrey Terras, \emph{Zeta functions of graphs}, Cambridge Studies in Advanced
  Mathematics, vol. 128, Cambridge University Press, Cambridge, 2011, A stroll
  through the garden. \MR{2768284}

\bibitem[Val21]{MV1}
Daniel Valli\`eres, \emph{On abelian {$\ell$}-towers of multigraphs}, Ann.
  Math. Qu\'{e}. \textbf{45} (2021), no.~2, 433--452. \MR{4308188}

\end{thebibliography}

\end{document}